\theoremstyle{plain}
\newtheorem{theorem}{Theorem}[section]
\newtheorem{corollary}[theorem]{Corollary}
\newtheorem{lemma}[theorem]{Lemma}
\newtheorem{proposition}[theorem]{Proposition}
\newtheorem{definition-lemma}[theorem]{Definition-Lemma}
\theoremstyle{remark}
\newtheorem{remark}[theorem]{Remark}
\theoremstyle{definition}
\newcommand{\Pic}[0]{\operatorname{Pic}}
\def\>{\geq}
\newcommand{\mbQ}{\mathbb{Q}}
\def\mcO{\mathcal{O}}
\newcommand{\num}{\equiv}
\newcommand{\OO}{{\mathcal{O}}}
\newcommand{\Q}{{\mathbb{Q}}}
\newcommand{\mult}{{\rm mult}}
\newcommand{\Supp}{{\rm Supp}}
\newcommand{\mbC}{\mathbb{C}}
\newcommand{\bir}{\dashrightarrow}
\def\lrd{\lfloor}
\def\rrd{\rfloor}
\def\>{\geq}
\def\mcO{\mathcal{O}}
\def\mcC{\mathcal{C}}
\def\mcE{\mathcal{E}}
\def\mcP{\mathcal{P}}
\def\msF{\mathscr{F}}
\def\msG{\mathscr{G}}
\def\bfR{\mathbf{R}}
\def\lrd{\lfloor}
\def\rrd{\rfloor}
\def\Ex{\operatorname{Ex}}
\def\dim{\operatorname{dim}}
\def\Alb{\operatorname{Alb}}
\def\Pic{\operatorname{Pic}}
\def\Proj{\operatorname{Proj}}
\theoremstyle{definition}
\newtheorem{definition}[theorem]{Definition}
\theoremstyle{definition}
\numberwithin{equation}{section}
\theoremstyle{remark}
\newtheorem{claim}[theorem]{Claim}
\title[Good Minimal Models for K\"ahler Varieties]{Existence of Good Minimal Models for K\"ahler Varieties of Maximal Albanese Dimension}
\author{Omprokash Das}
\address{School of Mathematics\\
Tata Institute of Fundamental Research\\
Homi Bhabha Road, Navy Nagar\\
Colaba, Mumbai 400005}
\email{omdas@math.tifr.res.in}
\email{omprokash@gmail.com}
\author{Christopher Hacon}
\address{Department of Mathematics\\
University of Utah\\
155 S 1400 E\\
Salt Lake City, Utah 84112}
\email{hacon@math.utah.edu}
\date{}
\begin{document}

\maketitle

\begin{abstract}
    In this short article we show that if $(X, B)$ is a compact K\"ahler klt pair of maximal Albanese dimension, then it has a good minimal model, i.e. there is a bimeromorphic contraction $\phi:X\bir X'$ such that $K_{X'}+B'$ is semi-ample.
\end{abstract}

\section{Introduction}
 
The main result of this article is the following
  \begin{theorem}\label{thm:main}
 Let $(X,B)$ be a compact K\"ahler klt pair of maximal Albanese dimension. Then $(X,B)$ has a good minimal model.
 \end{theorem}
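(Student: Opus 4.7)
The plan is to adapt the projective-case argument (due in essence to Fujino, building on work of Kawamata and Chen--Hacon) to the K\"ahler setting, combining a K\"ahler $(K_X+B)$-MMP with an inductive analysis of the Iitaka fibration via the canonical bundle formula. First I would verify that $K_X+B$ is pseudo-effective. Since $(X,B)$ has maximal Albanese dimension, the Albanese morphism $a\colon X\to \Alb(X)$ is generically finite onto its image, and a K\"ahler generic vanishing result (in the spirit of Pareschi--Popa--Schnell) or a direct construction of pluricanonical forms via $a$ delivers the needed pseudo-effectivity. Next I would run a $(K_X+B)$-MMP with scaling of a K\"ahler class, using the recently established K\"ahler MMP for compact K\"ahler klt pairs; because $K_X+B$ is pseudo-effective, this MMP terminates with a bimeromorphic contraction $\phi\colon X\bir X'$ such that $K_{X'}+B'$ is nef, and $(X',B')$ retains maximal Albanese dimension since only $(K_X+B)$-negative extremal contractions occur.

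It then remains to prove abundance, i.e.\ semi-ampleness of $K_{X'}+B'$. Passing to a log resolution and letting $f\colon X'\to Z$ be a model of the Iitaka fibration of $K_{X'}+B'$, the general fibre $F$ is klt with maximal Albanese dimension and $\kappa(K_F+B_F)=0$; by the K\"ahler analogue of Chen--Hacon's theorem, this forces $K_F+B_F\sim_\Q 0$. The canonical bundle formula then descends $K_{X'}+B'$ to a generalised klt pair $(Z,B_Z+M_Z)$ with $K_{X'}+B'\sim_\Q f^*(K_Z+B_Z+M_Z)$, where $K_Z+B_Z+M_Z$ is big and nef; moreover, $Z$ inherits a generically finite morphism to a quotient torus of $\Alb(X)$, so it is again of maximal Albanese dimension. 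Induction on dimension, applied to the class of compact K\"ahler generalised klt pairs of maximal Albanese dimension, now yields semi-ampleness of $K_Z+B_Z+M_Z$, and hence of $K_{X'}+B'$ after pullback. The base of the induction is the case where $K+B$ is already big, which reduces to the existence of good minimal models for K\"ahler pairs of log general type.

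The hardest parts will be the K\"ahler-specific inputs, in particular: (i) termination of the $(K_X+B)$-MMP with scaling in this generality; (ii) the K\"ahler analogue of Chen--Hacon's classification of klt pairs of maximal Albanese dimension with $\kappa=0$, which ultimately rests on K\"ahler generic vanishing for higher direct images of adjoint line bundles under morphisms to complex tori; and (iii) setting up the canonical bundle formula with generalised klt structure in the K\"ahler category, including nefness of the moduli part. Once these ingredients are in place, the overall architecture of the proof mirrors its projective ancestor.
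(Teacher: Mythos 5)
There is a genuine gap at the very first structural step of your argument: you propose to ``run a $(K_X+B)$-MMP with scaling of a K\"ahler class, using the recently established K\"ahler MMP for compact K\"ahler klt pairs'' to reach a model on which $K_{X'}+B'$ is nef. No such absolute MMP (equivalently, no cone/contraction theorem) is available for compact K\"ahler klt pairs in arbitrary dimension; it is known only in low dimensions and, crucially, in the \emph{relative} setting over a base to which $X$ admits a projective morphism. The paper states this explicitly in the introduction -- the unavailability of the K\"ahler cone theorem is precisely why the projective argument of Fujino does not transfer -- and the entire architecture of the actual proof is designed to avoid ever producing an absolutely nef model. Concretely, the paper only runs MMPs over the Albanese torus $A$ (after a log resolution making $a:X\to A$ projective, so that the relative MMP of \cite{DHP22} and \cite{Fuj22} applies), and then proves semi-ampleness of $K_{X'}+B'$ directly: in the $\kappa=0$ case by a Fourier--Mukai/generic vanishing argument showing the unique pluricanonical divisor is $a$-exceptional, and in the $\kappa\geq 1$ case by showing the ``error'' divisor $E$ in $K_X+B\sim_{\Q}H_X+E$ is exceptional over the fiber product $\bar A=\bar Z\times_{A'}A$ and running a $(K_X+B+(2n+3)sH_X)$-MMP over $A$ whose steps are automatically steps over $\bar Z$. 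Listing the absolute termination-with-scaling as a ``hardest part'' to be filled in later does not repair this: it is an open problem, not a technical lemma, so your proof does not get off the ground as written.

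Your abundance step has a second, smaller gap of the same flavor: the canonical bundle formula with generalized klt structure and nef moduli part, which you invoke to descend $K_{X'}+B'$ to $(Z,B_Z+M_Z)$, is not established in the K\"ahler category in the generality you need (though $Z$ itself is projective here, the fibration $X'\to Z$ is not). The paper sidesteps this too, replacing the canonical bundle formula by the explicit geometry of the Albanese: the very general Iitaka fiber $F$ has a good minimal model which is a torus $F'$ mapping to a fixed subtorus $K\subset A$, which produces the quotient $A'=A/K$ and the generically finite map $\bar Z\to A'$, and the rest is done by hand with Fujino's torsion-freeness and vanishing theorems for direct images of dualizing sheaves. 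The ingredients of yours that do align with the paper are the pseudo-effectivity (in fact $\kappa\geq 0$, via pullback of holomorphic forms from the Albanese) and the K\"ahler analogue of the Chen--Hacon characterization of tori, which the paper proves using \cite{PPS17}.
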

 
 This generalizes the main result of \cite {Fuj15} from the projective case to the K\"ahler case. The main idea is to observe that replacing $X$ by an appropriate resolution, then the Albanese morphism $X\to A$ is projective and so by \cite{DHP22} and \cite{Fuj22} we may run the relative MMP over $A$. Thus we may assume that $K_X+B$ is nef over $A$. If $X$ is projective and $K_X+B$ is not nef, then by the cone theorem, $X$ must contain a $K_X+B$ negative rational curve $C$. Since $A$ contains no rational curves, then $C$ is vertical over $A$, contradicting the fact that $K_X+B$ is nef over $A$ \cite{Fuj15}.
 Unluckily, the cone theorem is not known for K\"ahler varieties and so we pursue an different argument. It would be interesting to find an alternative proof based on the arguments of \cite{CH20}.

\section{Preliminaries}
An \textit{analytic variety} or simply a \textit{variety} is a reduced irreducible complex space.  
Let $X$ be a compact K\"ahler manifold and $\Alb(X)$ is the \textit{Albanese torus} (not necessarily an Abelian variety). Then by $a:X\to \Alb(X)$ we will denote the \textit{Albanese morphism}. This morphism can also be characterized via the following universal property: $a:X\to \Alb(X)$ is the Albanese morphism if for every morphism $b:X\to T$ to a complex torus $T$ there is a unique morphism $\phi:\Alb(X)\to T$ such that $b=\phi\circ a$.\\
The Albanese dimension of $X$ is defined as $\dim a(X)$. We say that $X$ has maximal Albanese dimension if $\dim a(X)=\dim X$ or equivalently, the Albanese morphism $a:X\to \Alb(X)$ is \textit{generically finite} onto its image. For the definition of \textit{singular} K\"ahler space see \cite{HP16} or \cite{DH20}.\\
A compact analytic variety $X$ is said to be in  \textit{Fujiki's class} $\mcC$ if $X$ is bimeromorphic to a compact K\"ahler manifold $Y$. In particular, there is a resolution of singularities $f:Y\to X$ such that $Y$ is a compact K\"ahler manifold.\\

\begin{definition}\label{def:albanese-morphism}
Let $X$ be a compact analytic variety in Fujiki's class $\mcC$. Assume that $X$ has rational singularities. Choose a resolution of singularities $\mu:Y\to X$ such that $Y$ is a K\"ahler manifold and let $a_Y:Y\to \Alb(Y)$ be the Albanese morphism of $Y$. Then from the proof of \cite[Lemma 8.1]{Kaw85} it follows that $a_Y\circ \mu^{-1}:X\bir \Alb(Y)$ extends to a unique morphism $a:X\to \Alb(X):=\Alb(Y)$. We call this morphism the Albanese morphism of $X$. Observe that $a:X\to \Alb(X)$ satisfies the universal property stated above. The Albanese dimension of $X$ is defined as above. Note that if $X$ is a compact analytic variety  with rational singularities, bimeromorphic to a complex torus $A$, then $A\cong\Alb(X)$ and $X\to A$ is a bimeromorphic morphism.
\end{definition}

The following result is well known, however, for a lack of proper reference and convenience of the readers we give a complete proof here.
\begin{lemma}\label{lem:effective-k-dim}
Let $A$ be a complex torus and $X\subset A$ is an analytic subvariety. Then for any resolution of singularities $\mu:Y\to X$, $H^0(Y, \omega_Y)\neq \{0\}$. 
\end{lemma}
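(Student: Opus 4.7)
The plan is to construct a nonzero section of $\omega_Y$ by restricting a suitable wedge of translation-invariant $1$-forms on $A$ and then pulling back to $Y$. Write $g = \dim A$ and $n = \dim X$, and let $V := H^0(A,\Omega_A^1)$, which has dimension $g$; evaluation at any point $a \in A$ gives an isomorphism $V \xrightarrow{\sim} T_a^*A$, because the cotangent bundle of a complex torus is trivialized by the translation-invariant $1$-forms.

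Choose a smooth point $p \in X$ over which the resolution $\mu : Y \to X$ is an isomorphism; such $p$ exists because $\mu$ is bimeromorphic and $X$ is generically smooth. The inclusion $T_pX \hookrightarrow T_pA$ dualizes to a surjection $T_p^*A \twoheadrightarrow T_p^*X$, so composing with the evaluation isomorphism yields a surjection $V \twoheadrightarrow T_p^*X$. Pick $\omega_1,\dots,\omega_n \in V$ whose restrictions form a basis of $T_p^*X$, and set $\eta := \omega_1 \wedge \cdots \wedge \omega_n \in H^0(A,\Omega_A^n)$.

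Now let $f := \iota \circ \mu : Y \to A$, where $\iota : X \hookrightarrow A$ is the inclusion. Since both $Y$ and $A$ are complex manifolds and $f$ is holomorphic, each pullback $f^*\omega_i$ lies in $H^0(Y,\Omega_Y^1)$, and hence
\[
\alpha := f^*\omega_1 \wedge \cdots \wedge f^*\omega_n = f^*\eta \in H^0(Y,\Omega_Y^n) = H^0(Y,\omega_Y).
\]
At the unique preimage $y \in Y$ of $p$, the differential factors as $df_y = d\iota_p \circ d\mu_y$ with $d\mu_y$ an isomorphism, so $df_y$ identifies $T_yY$ with $T_pX \subset T_pA$; by the choice of $\omega_1,\dots,\omega_n$, the forms $f^*\omega_i$ restrict to a basis of $T_y^*Y$, whence $\alpha(y) \neq 0$. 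Thus $\alpha$ is a nonzero global section of $\omega_Y$.

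I do not anticipate a serious obstacle here: the argument is essentially linear algebra, enabled by the parallelizability of $A$ (so that $\omega_i$ are actual holomorphic forms on a smooth ambient space rather than something defined only on the smooth locus of $X$), together with the elementary fact that pullbacks of holomorphic forms under holomorphic maps between complex manifolds remain holomorphic. The only care needed is to pick $p$ in the locus where $\mu$ is a biholomorphism, so that nonvanishing of $\eta$ transports cleanly to nonvanishing of $\alpha$.
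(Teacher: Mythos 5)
Your proof is correct and follows essentially the same route as the paper's: both exploit the triviality of $\Omega_A^1$ (equivalently, the translation-invariant forms) to produce a global holomorphic $d$-form on $A$ whose restriction to $X$ is nonzero at a general smooth point, and then pull it back to $Y$. The paper phrases this with local coordinates near a smooth point of $X$ and extends $dx_1\wedge\cdots\wedge dx_d$ using the triviality of $\Omega_A^d$, while you use the evaluation isomorphism $H^0(A,\Omega_A^1)\xrightarrow{\sim}T_p^*A$; these are the same argument.
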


\begin{proof}
Let $\mu:Y\to X$ be a resolution of singularities of $X$. Choose a point $p\in X\setminus (X_\textsubscript{sing}\cup \mu(\Ex(\mu)))$ and let $(x_1, x_2,\ldots, x_n)$ be a local coordinate on $A$ near $p$, where $n=\dim A$. Since $p\in X$ is a smooth point, $X$ is local complete intersection near $p$. Thus we may assume that $X=\mbox{Zero}(x_{d+1},\ldots, x_n)$ near $p$, where $d=\dim X$. Then $dx_1\wedge dx_2\wedge\cdots\wedge dx_d$ is a local section of $\Omega_A^d$ near $p$ whose restriction to $X$ is a non-zero local section of $\Omega_X^d$ near $p\in X$. Since $\Omega_A^d$ is a trivial vector bundle, there is a non-zero global section $\Theta\in H^0(A, \Omega_A^d)$ which restricts to $dx_1\wedge dx_2\wedge\cdots\wedge dx_d$ near $p$, and hence $\mu^*\Theta|_X$ is a non-zero global section of $\Omega_Y^d:=\omega_Y$. In particular, $H^0(Y, \omega_Y)\neq \{0\}$.
\end{proof}

\begin{corollary}\label{cor:maximal-albanese-kappa}
Let $X$ be a compact analytic variety in Fujiki's class $\mcC$ with canonical singularities. If $X$ has maximal Albanese dimension, then $\kappa(X)\>0$.
\end{corollary}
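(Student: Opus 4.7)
The strategy is to produce a nonzero global section of $\omega_Y$ for a resolution $Y$ of $X$ by pulling back a holomorphic $d$-form from $\Alb(Y)$, and then invoke birational invariance of the plurigenera for varieties with canonical singularities. Since canonical singularities are rational, the Albanese morphism of $X$ is defined as in Definition \ref{def:albanese-morphism}.

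First, I would choose a resolution $\mu\colon Y\to X$ with $Y$ a compact K\"ahler manifold, and consider the induced Albanese morphism $a_Y = a\circ\mu\colon Y\to \Alb(Y)=\Alb(X)$. Since $X$ has maximal Albanese dimension, so does $Y$; that is, $a_Y$ is generically finite onto its image $Z := a_Y(Y) \subset \Alb(Y)$, with $\dim Z = \dim Y = d$. Next, I would produce a nonzero element of $H^0(Y,\omega_Y)$ in the spirit of Lemma \ref{lem:effective-k-dim}. The bundle $\Omega^d_{\Alb(Y)}$ is trivial, so any nonzero global section $\Theta\in H^0(\Alb(Y),\Omega^d_{\Alb(Y)})$ pulls back to a section $a_Y^*\Theta \in H^0(Y,\Omega^d_Y)=H^0(Y,\omega_Y)$. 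To check that $a_Y^*\Theta$ is not identically zero, pick $p\in Y$ lying over a smooth point $q \in Z_\sm$ at which $a_Y$ is \'etale, choose local coordinates $(x_1,\dots,x_n)$ on $\Alb(Y)$ centered at $q$ so that $Z = \Zero(x_{d+1},\dots,x_n)$ near $q$, and arrange $\Theta$ to restrict to $dx_1\wedge\cdots\wedge dx_d$ on this chart; then $a_Y^*\Theta$ is manifestly nonvanishing at $p$, so $H^0(Y,\omega_Y)\neq 0$.

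Finally, since $X$ has canonical singularities, $\mu_*\omega_Y=\omega_X$, hence $H^0(X,\omega_X)=H^0(Y,\omega_Y)\neq 0$, which gives $\kappa(X)=\kappa(Y)\geq 0$. The only point requiring a bit of care is that Lemma \ref{lem:effective-k-dim} is stated for a resolution of a subvariety of a complex torus, whereas here $a_Y\colon Y\to Z\subset \Alb(Y)$ is only generically finite; but once $a_Y$ is taken \'etale at a smooth point $p\in Y$ over a smooth point $q\in Z_\sm$, the pullback of the local generator of $\Omega^d_{\Alb(Y)}$ near $q$ is a local generator of $\omega_Y$ near $p$, so the argument applies verbatim.
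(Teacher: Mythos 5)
Your proof is correct, and it takes a slightly different (more direct) route than the paper. The paper's proof first resolves the image $Y:=a(X)\subset\Alb(X)$, invokes Lemma \ref{lem:effective-k-dim} to get $\kappa(Z)\geq 0$ for a resolution $Z$ of that image, and then transfers nonnegativity of the Kodaira dimension back to $X$ via the ramification formula $K_X=\phi^*K_Z+E$, $E\geq 0$, for the generically finite map $\phi:X\to Z$ obtained after resolving the graph. You instead pull the holomorphic $d$-form back \emph{directly} along the generically finite Albanese morphism $a_Y:Y\to\Alb(Y)$ of a resolution of $X$, in effect reproving a generically finite variant of Lemma \ref{lem:effective-k-dim} rather than citing it; both arguments rest on the same mechanism (triviality of $\Omega^d_A$ plus nonvanishing of the restricted form at a general point where the differential of $a_Y$ has full rank). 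Your version skips the intermediate resolution of the image and the ramification-formula step, and it is in fact the same device the paper uses later in the proof of Theorem \ref{thm:maximal-albanese-torus} (``pick a general $\Theta\in H^0(\Omega_A^d)$, then $0\neq a^*\Theta$''). Two cosmetic remarks: a global section of the trivial bundle $\Omega^d_{\Alb(Y)}$ need not literally restrict to $dx_1\wedge\cdots\wedge dx_d$ in an arbitrary adapted chart --- one should rather say that the evaluation $H^0(\Alb(Y),\Omega^d_{\Alb(Y)})\to\Omega^d_Y|_p$ is nonzero, so some $\Theta$ works --- but the paper's own Lemma \ref{lem:effective-k-dim} uses the identical phrasing, so this is not a gap by the paper's standards; and for the final step you only need the inclusion $\mu_*\omega_Y\subseteq\mcO_X(K_X)$ (valid for any normal $X$) together with $\kappa(Y)=\kappa(X)$ for canonical singularities, so the full equality $\mu_*\omega_Y=\omega_X$ is more than is required.
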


\begin{proof}
First note that if $f:W\to X$ is a proper bimeromorphic morphism, then $\kappa(X)\>0$ if and only if $\kappa(W)\>0$, since $X$ has canonical singularities. Now let $a:X\to \Alb(X)$ be the Albanese morphism, $Y:=a(X)$, and $\pi:Z\to Y$ is a resolution of singularities of $Y$. Then $\kappa(Z)\>0$ by Lemma \ref{lem:effective-k-dim}. Note that there is a generically finite meromorphic map $\phi:X\bir Z$; resolving the graph of $\phi$ we may assume that $X$ is smooth and $\phi:X\to Z$ is a morphism. Then $K_X=\phi^*K_Z+E$, where $E\>0$ is an effective divisor. Therefore $\kappa(X)\>0$, since $\kappa(Z)\>0$.  
\end{proof}

\subsection{Fourier-Mukai transform} Let $T$ be a complex torus of dimension $g$ and $\hat T=\Pic^0(T)$ its dual torus. Let $p_T: T\times\hat T\to T$ and $p_{\hat T}:T\times\hat T\to \hat T$ be the projections, and $\mcP$ the normalized Poincar\'e line bundle on $T\times\hat T$ so that $\mcP|_{T\times\{0\}}\cong \mcO_T$ and $\mcP|_{\{0\}\times \hat T}\cong\mcO_{\hat T}$. Let $\hat S$ be the functor from the category of $\mcO_T$-sheaves to the category of $\mcO_{\hat T}$-sheaves, defined by 
\[
\hat S(\msF):=p_{\hat T, *}(p_T^*\msF\otimes\mcP),
\]
where $\msF$ is a sheaf of $\mcO_T$-modules.
Similarly, $S$ is a functor from the category of $\mcO_{\hat T}$-sheaves to the category of $\mcO_T$-sheaves, defined as
\[
S(\msG):=p_{T, *}(p_{\hat T}^*\msG\otimes\mcP),
\]
where $\msG$ is a sheaf of $\mcO_{\hat T}$-modules.\\
The corresponding derived functors are 
\[
\bfR\hat S(\boldsymbol{\cdot}):=\bfR p_{\hat T, *}(p_T^*(\boldsymbol{\cdot})\otimes\mcP) \mbox{ and } \bfR S(\boldsymbol{\cdot} ):=\bfR p_{T, *}(p_{\hat T}^*(\boldsymbol{\cdot})\otimes\mcP).
\]
Recall the following fundamental result of Mukai \cite[Theorem 2.2, and (3.8)]{Mu81}, \cite[Theorem 13.1]{PPS17}
\begin{theorem}\label{thm:fm-isomorphism}
With notations and hypothesis as above, there are isomorphisms of functors (on the bounded derived category of coherent sheaves)
\begin{equation*}
        \bfR\hat S\circ \bfR S \cong (-1)_{\hat T}^*[-g], \qquad
        \bfR S\circ\bfR\hat S \cong (-1)_T^*[-g],\end{equation*}
\begin{equation*}
        \mathbf\Delta_T \circ \bfR S=((-1_T)^*\circ \bfR S \circ \mathbf\Delta _{\hat T})[-g].
\end{equation*}
\end{theorem}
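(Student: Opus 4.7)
The plan is to prove all three identities by identifying each composition with an integral transform whose kernel can be computed explicitly, using base change and the fact that the Poincaré bundle on $T\times\hat T$ is ``universal'' in a strong sense. For the first two isomorphisms, the strategy is to exhibit $\bfR\hat S\circ\bfR S$ as a Fourier–Mukai transform on $\hat T\times\hat T$ with kernel concentrated in a single cohomological degree along the antidiagonal. For the third identity, the strategy is to apply relative Grothendieck duality to the smooth proper projection $p_T\colon T\times\hat T\to T$, whose relative dualizing sheaf is trivial.

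First I would recall the convolution formula for Fourier–Mukai kernels: if $\bfR S$ has kernel $\mcP$ on $\hat T\times T$ and $\bfR\hat S$ has kernel $\mcP$ on $T\times \hat T$, then $\bfR\hat S\circ \bfR S$ is the integral transform on $\hat T_1\times \hat T_2$ with kernel
\[
K \;=\; \bfR\pi_{13,*}\!\left(\pi_{12}^*\mcP\otimes^{\bfL}\pi_{23}^*\mcP\right),
\]
where the $\pi_{ij}$ are the projections from $\hat T_1\times T\times \hat T_2$. The key computation is that the restriction of $\pi_{12}^*\mcP\otimes \pi_{23}^*\mcP$ to the fiber of $\pi_{13}$ over $(\xi_1,\xi_2)$ is the degree-zero line bundle $L_{\xi_1+\xi_2}$ on $T$, whose cohomology is well known to vanish unless $\xi_1+\xi_2=0$, in which case it is $\wedge^\bullet H^1(T,\mcO_T)$ concentrated in a $g$-dimensional piece at the top. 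By cohomology and base change, $K$ is therefore supported on the antidiagonal $\{\xi_1+\xi_2=0\}$; a local analytic computation using the explicit description of $\mcP$ (or Mukai's trick of pulling back along the addition map $m\colon \hat T\times \hat T\to \hat T$) identifies $K$ with the pushforward of $\mcO_{\hat T}[-g]$ along the graph of $-1_{\hat T}$. This yields the first isomorphism, and the symmetric argument on $T\times T$ (after swapping the roles of $T$ and $\hat T$ via the canonical identification $\hat{\hat T}=T$) yields the second.

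For the third identity, the idea is to apply relative Grothendieck–Serre duality to $p_T\colon T\times\hat T\to T$, which is smooth proper of relative dimension $g$ with $\omega_{p_T}\cong \mcO$. For $\msG\in D^b(\hat T)$ one writes
\[
\bfR\Hom\!\left(\bfR p_{T,*}(p_{\hat T}^*\msG\otimes \mcP),\;\mcO_T\right)
\;=\;\bfR p_{T,*}\,\bfR\Hom\!\left(p_{\hat T}^*\msG\otimes \mcP,\;\mcO_{T\times\hat T}\right)[g],
\]
and then uses the identity $\mcP^{-1}\cong(-1_T,1_{\hat T})^*\mcP$ (equivalently $(1_T,-1_{\hat T})^*\mcP$) to absorb the dual of $\mcP$ into a translate. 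Flat base change along the automorphism $(-1_T,1_{\hat T})$ and the relation $p_T\circ(-1_T,1_{\hat T})=(-1_T)\circ p_T$ then convert the right-hand side into $(-1_T)^*\bfR S(\mathbf\Delta_{\hat T}\msG)$, up to the expected shift by $g$. Reshuffling the shift produces the stated formula $\mathbf\Delta_T\circ \bfR S=((-1_T)^*\circ \bfR S\circ \mathbf\Delta_{\hat T})[-g]$.

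The main obstacle is the identification of $K$ at the antidiagonal: one needs to go beyond set-theoretic support and pin down both the correct cohomological degree and the trivialization of the resulting line bundle on the antidiagonal. Mukai's original argument handles this via the see-saw principle and an explicit inspection of the cohomology of $\mcP$ in the ``universal'' family; an alternative is to verify the isomorphism on objects of the form $\mcO_x$ (skyscrapers) and $\mcP_\xi$ (topologically trivial line bundles), which together generate the derived category of $\hat T$ in a suitable sense, and then upgrade the pointwise check to a functorial isomorphism using flatness and standard gluing arguments for Fourier–Mukai kernels.
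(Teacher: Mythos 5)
The paper does not actually prove this statement: it is quoted from \cite[Theorem 2.2 and (3.8)]{Mu81} and \cite[Theorem 13.1]{PPS17}, so there is no in-paper argument to compare against. Your sketch is essentially a reconstruction of Mukai's original proof --- convolution of Fourier--Mukai kernels, the vanishing $H^*(T,L)=0$ for $\mcO_T\ne L\in\Pic^0(T)$ to locate the convolved kernel on the antidiagonal, and relative Grothendieck--Verdier duality for $p_T$ with $\omega_{p_T}\cong\mcO$ for the exchange formula --- which is exactly the route the cited sources take, with \cite{PPS17} supplying the analytic-category versions of base change and duality that are needed because $T$ is only a complex torus (a point you should make explicit). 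Two details need care. First, your parenthetical that the fiberwise cohomology over the antidiagonal is ``$\wedge^\bullet H^1(T,\mcO_T)$ concentrated in a $g$-dimensional piece at the top'' is not literally true: $H^i(T,\mcO_T)\ne 0$ for every $0\le i\le g$. The concentration of $\bfR p_{\hat T,*}\mcP$ (equivalently of the kernel $K$) in degree $g$ is precisely the nontrivial step; it requires representing the pushforward by a bounded complex of locally free sheaves that is exact off a codimension-$g$ locus, invoking the acyclicity lemma to kill $R^i$ for $i<g$, and a duality/trace computation to identify $R^gp_{\hat T,*}\mcP$ with the skyscraper at the origin --- you correctly flag this as the main obstacle, and either Mukai's see-saw argument or your check on skyscrapers and the $\mcP_\xi$ closes it. Second, the shift in the duality formula should be pinned down rather than ``reshuffled'': with $\mathbf\Delta_T(\cdot)=\bfR\mathcal{H}om(\cdot,\mcO_T)[g]$ and $p_T^!\mcO_T\cong\mcO_{T\times\hat T}[g]$, the computation you outline appears to yield $\mathbf\Delta_T\circ\bfR S\cong((-1_T)^*\circ\bfR S\circ\mathbf\Delta_{\hat T})[g]$ rather than $[-g]$, so either the stated sign of the shift or your bookkeeping needs to be reconciled with the convention of \cite[(3.8)]{Mu81}.
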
~\\
Recall that $\mathbf\Delta_T(\boldsymbol{\cdot}):=\mathbf R{\mathcal Hom}(\boldsymbol{\cdot},\mathcal O _T)[g]$ is the dualizing functor.

\begin{definition}\label{def:h-vector-bundle}
Let $A$ be a complex torus. For $a\in A$, let $t_a:A\to A$ be the usual translation morphism defined by $a$. A vector bundle $\mcE$ on $A$ is called \textit{homogeneous}, if $t_a^*\mcE\cong\mcE$ for all $a\in A$. 
\end{definition}

\begin{remark}\label{rmk:h-vector-bundle}
Let $A$ be a complex torus, $\hat A$ the dual torus and $\dim A=\dim \hat A=g$. Then from the proof of \cite[Example 3.2]{Mu81} it follows that $R^g\hat S$ gives an equivalence of categories \[\mathbf H_A:=\{\mbox{Homogeneous vector bundles on } A\},\qquad {\rm and}\] \[\mathbf C^f_{\hat A}:=\{\mbox{Coherent sheaves on } \hat A \mbox{ supported at finitely many points} \}.\] Note that in \cite{Mu81} the results are all stated for abelian varieties, however, we observe that in the proof of \cite[Example 3.2]{Mu81} the main arguments follow from Theorem \ref{thm:fm-isomorphism} and the isomorphisms in \cite[(3.1), page 158]{Mu81}, both of which hold over complex tori. In particular, \cite[Example 3.2]{Mu81} holds for complex tori.  
\end{remark}

We will need the following result on the rational singularity of (log) canonical models of klt pairs. 
\begin{proposition}\label{pro:rational-singularities}
Let $(X, B)$ be a klt pair, where $X$ is a compact analytic variety in Fujiki's class $\mcC$. Assume that the Kodaira dimension $\kappa(X, K_X+B)\>0$. Then $R(X, K_X+B):=\oplus_{m\>0} H^0(X, m(K_X+B))$ is a finitely generated $\mbC$-algebra and \[\bar Z=\Proj R(X, K_X+B)\]
has rational singularities. 
\end{proposition}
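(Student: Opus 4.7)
The plan is to reduce the statement to the log-general-type case and then invoke the existence of good minimal models for klt K\"ahler pairs established in \cite{DHP22} together with the base-point-free theorem of \cite{Fuj22}. The main steps are a standard log-resolution reduction, an Iitaka-fibration reduction controlled by the canonical bundle formula, and a vanishing argument giving rational singularities of the base.

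First I would choose a log resolution $\mu \colon Y \to X$ with $Y$ a compact K\"ahler manifold, and write $K_Y + B_Y = \mu^*(K_X + B) + E$ with $(Y, B_Y)$ log smooth klt, $B_Y \geq 0$, and $E \geq 0$ a $\mu$-exceptional divisor with no common components with $B_Y$. Since $(X, B)$ is klt, $X$ has rational singularities and pushforward of pluri-log-canonical sections gives $R(X, K_X + B) \cong R(Y, K_Y + B_Y)$. It therefore suffices to prove the proposition for $(Y, B_Y)$; in particular $\bar Z$ is unchanged under this replacement, and $\kappa(Y, K_Y + B_Y) = \kappa(X, K_X + B) \geq 0$.

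Next I would pass to the Iitaka fibration $f \colon Y' \to W$ of $K_Y + B_Y$, after modifying $Y$ to $Y'$ if necessary, with $W$ a normal compact analytic base of dimension $\kappa(Y, K_Y + B_Y)$. A Fujino--Mori canonical bundle formula adapted to the K\"ahler setting furnishes a generalized klt pair $(W, B_W + M_W)$ with $K_{Y'} + B_{Y'} \sim_{\mathbb{Q}} f^*(K_W + B_W + M_W)$ and $K_W + B_W + M_W$ big; the log canonical rings agree (up to Iitaka index). Running the K\"ahler MMP of \cite{DHP22} and applying the base-point-free theorem of \cite{Fuj22} to the log-general-type generalized klt pair $(W, B_W + M_W)$ produces a good minimal model on which the log canonical divisor becomes semi-ample, and the induced semi-ample contraction has image exactly $\bar Z$. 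This gives finite generation of $R(X, K_X + B)$.

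For rational singularities of $\bar Z$, let $\phi \colon Y'' \to \bar Z$ denote the contraction defined by the semi-ample divisor $K_{Y''} + B'' = \phi^* H$ for an ample $\mathbb{Q}$-divisor $H$ on $\bar Z$, where $(Y'', B'')$ is the good minimal model obtained above. Since $(Y'', B'')$ is klt, $Y''$ has rational singularities; Koll\'ar-type relative vanishing applied to the semi-ample divisor $\phi^* H$ yields $R^i \phi_* \mathcal{O}_{Y''} = 0$ for all $i > 0$. Combined with $\phi_* \mathcal{O}_{Y''} = \mathcal{O}_{\bar Z}$ (by normality of $\bar Z$ and connectedness of fibers), a standard Leray spectral sequence argument forces $\bar Z$ to have rational singularities. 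The main obstacle is the second step: extending the canonical bundle formula and the good-minimal-model existence to generalized klt pairs in the K\"ahler category, both of which require non-trivial analytic adaptations of the projective theory.
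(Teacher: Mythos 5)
Your proposal follows the same skeleton as the paper (reduce via a log resolution and the Iitaka fibration to a log-general-type problem on the base, produce a log canonical model there, and deduce rational singularities), but it routes the middle step through generalized klt pairs and a K\"ahler MMP on the base, and you flag exactly that as your main unresolved obstacle. That obstacle largely dissolves: the base $Z$ of the Iitaka fibration of $K_X+B$ is automatically a projective variety (it is the image of a pluri-log-canonical map), so no K\"ahler MMP is needed downstairs. Moreover, the paper avoids generalized pairs entirely by invoking the proof of \cite[Theorem 5.1]{Fuj15} (the Fujino--Mori canonical bundle formula package), which produces a smooth projective $Z'$ birational to $Z$ and an honest effective $\mathbb{Q}$-divisor $B_{Z'}\geq 0$ with $(Z',B_{Z'})$ klt, $K_{Z'}+B_{Z'}$ big, and $R(X,K_X+B)^{(d)}\cong R(Z',K_{Z'}+B_{Z'})^{(d')}$; the only genuinely K\"ahler input there is that the Hodge-theoretic arguments behind the canonical bundle formula work for a K\"ahler total space over a projective base, which is what the citation supplies. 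From that point the paper simply applies \cite[Theorem 1.2(2)]{BCHM10} and the projective base-point-free theorem to get a birational morphism $\phi:Z''\to \bar Z$ with $K_{Z''}+B_{Z''}=\phi^*(K_{\bar Z}+B_{\bar Z})$, so that $(\bar Z,\phi_*B_{Z''})$ is klt and $\bar Z$ has rational singularities for free. Your alternative derivation of rational singularities via $R^i\phi_*\mathcal{O}=0$ (relative Kawamata--Viehweg for the crepant birational contraction) is correct and essentially equivalent, just a less economical way of saying that $\bar Z$ underlies a klt pair. So the approach is sound, but to complete it as written you would either need the generalized-pair machinery you acknowledge is missing, or you should replace that step by the Fujino--Mori reduction to an honest projective klt pair as the paper does.
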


\begin{proof}
The finite generation of $R(X, K_X+B)$ follows from \cite[Theorem 1.3]{DHP22} and \cite[Theorem 5.1]{Fuj15}. Let $f:X\bir Z$ be the Iitaka fibration of $K_X+B$. Resolving $Z, f$ and $X$, we may assume that $X$ is a compact K\"ahler manifold, $B$ has SNC support, $Z$ is a smooth projective variety and $f$ is a morphism. Then from the proof of \cite[Theorem 5.1]{Fuj15} it follows that there is a smooth projective variety $Z'$ which is birational to $Z$ and an effective $\mbQ$-divisor $B_{Z'}\>0$ such that $(Z', B_{Z'})$ is klt, $K_{Z'}+B_{Z'}$ is big and the following holds
\[
R(X, K_X+B)^{(d)}\cong R(Z', K_{Z'}+B_{Z'})^{(d')}, 
\]
where the superscripts $d$ and $d'$ represent the corresponding $d$ and $d'$-Veronese subrings.\\
Thus $\bar Z=\Proj R(X, K_X+B)\cong \Proj R(Z', K_{Z'}+B_{Z'})$ is the log-canonical model of $(Z', B_{Z'})$. If $(Z'', B_{Z''})$ is a minimal model of $(Z', B_{Z'})$ as in \cite[Theorem 1.2(2)]{BCHM10}, then by the base-point free theorem, there is a birational morphism $\phi:Z''\to \bar Z$ such that $K_{Z''}+B_{Z''}=\phi^*(K_{\bar Z}+B_{\bar Z})$, where $B_{\bar Z}:=\phi_*B_{Z''}\>0$. Thus $(\bar Z, B_{\bar Z})$ is a klt pair, and hence $\bar Z$ has rational singularities.
\end{proof}

\section{Main Theorem}
In this section we will prove our main theorem. We begin with some preparation.
\begin{definition}\label{def:plurigenera}
Let $X$ be a smooth compact analytic variety. Then the $m$-th \textit{plurigenera} of $X$ is defined as 
\[ 
P_m(X):=\dim_{\mbC} H^0(X, \omega^m_X).
\]
\end{definition}

The next result is one of our main tools in the proof of the main theorem, it is also of independent interest. It follows immediately from the main results of \cite{PPS17}.
 \begin{theorem}\label{thm:maximal-albanese-torus}
Let $X$ be a compact K\"ahler variety with terminal singularities. Assume that $X$ has maximal Albanese dimension and $\kappa (X)=0$. Then $X$ is bimeromorphic to a torus. Additionally, if $K_X$ is also nef, then $X$ is isomorphic to a torus.
\end{theorem}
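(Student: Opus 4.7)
The plan is to reduce to a smooth Kähler model and invoke the Kähler version of the Chen--Hacon theorem established in \cite{PPS17}. Concretely, I take a resolution $\mu\colon Y\to X$ with $Y$ a compact Kähler manifold; because $X$ has terminal (hence rational) singularities, $\kappa(Y)=\kappa(X)=0$, and by Definition~\ref{def:albanese-morphism} one has $\Alb(Y)=\Alb(X)=:T$ with $Y$ still of maximal Albanese dimension. The main theorem of \cite{PPS17} then asserts that any such $Y$ is bimeromorphic to its Albanese torus, so $X$ is bimeromorphic to $T$. This establishes the first claim.

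For the additional statement, I now assume $K_X$ is nef, so that the Albanese morphism $a\colon X\to T$ is a bimeromorphic morphism. I pick a common resolution $p\colon Y\to X$ and set $q:=a\circ p\colon Y\to T$. Since $X$ is terminal and $K_T=0$,
\[
K_Y \;=\; p^{*}K_X + E \;=\; F,
\]
with $E\geq 0$ a $p$-exceptional $\mathbb{Q}$-divisor and $F\geq 0$ a $q$-exceptional divisor. Pushing forward by $p$ yields the effective Weil divisor $K_X = p_{*}F$. Every prime component of $p_{*}F$ arises from a prime component of $F$ whose image in $T$ has codimension $\geq 2$; since $a$ is bimeromorphic, this forces $K_X$ to be $a$-exceptional. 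Thus $K_X$ is an effective, $\mathbb{Q}$-Cartier, $a$-exceptional and $a$-nef divisor, and the negativity lemma applied to $a$ gives $K_X=0$.

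It remains to promote $K_X=0$ to $X\cong T$. Substituting $K_X=0$ in $p^{*}K_X+E=F$ yields $E=F$, so the $p$-exceptional and the $q$-exceptional prime divisors on $Y$ coincide (using that both $X$ and $T$ are terminal, every exceptional prime appears with positive discrepancy). Consequently $a$ has no exceptional divisor: the strict transform on $Y$ of an $a$-exceptional prime on $X$ would be $q$-exceptional but not $p$-exceptional, contradicting the previous sentence. Hence $a\colon X\to T$ is a small bimeromorphic morphism from a terminal variety with trivial canonical to the smooth torus $T$. The main obstacle of the proof is this final step, which I would handle by invoking the uniqueness of minimal models in the Kähler MMP underpinning \cite{DHP22, Fuj22}: since $T$ is itself a smooth minimal model in its bimeromorphic class, it is the unique one, forcing $a$ to be an isomorphism.
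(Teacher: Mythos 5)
Your reduction to a smooth K\"ahler model and the appeal to \cite{PPS17} for the first assertion is exactly the paper's route. One small point: to invoke the relevant theorems of \cite{PPS17} you still have to verify their hypotheses, namely $P_1(Y)=P_2(Y)=1$ and the surjectivity of $a_Y$. The paper does this by pulling back a general holomorphic $d$-form from $\Alb(Y)$ (Lemma \ref{lem:effective-k-dim}) to get $P_1(Y)>0$, hence $P_m(Y)>0$ for all $m$, hence $P_1=P_2=1$ since $\kappa=0$; surjectivity then comes from \cite[Theorem 19.1]{PPS17} and the bimeromorphism from \cite[Theorem B]{PPS17}. This is routine but should be recorded rather than folded into ``the main theorem of \cite{PPS17}.''

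For the nef statement, your computation is sound up to the conclusion that $K_X=0$ and that $a$ contracts no divisor, but the final step --- the one you yourself flag as the main obstacle --- rests on a false principle. Minimal models are \emph{not} unique in their bimeromorphic class; they are only unique up to isomorphism in codimension one (flops), so ``$T$ is the unique minimal model'' cannot be used to force a small bimeromorphic morphism $a\colon X\to T$ to be an isomorphism. What actually closes the argument is the elementary purity statement: the exceptional locus of a proper bimeromorphic morphism onto a normal $\mathbb{Q}$-factorial (in particular smooth) variety has pure codimension one. Hence a bimeromorphic morphism onto the smooth torus $T$ with no exceptional divisor has empty exceptional locus and is an isomorphism. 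Once you have this fact, your entire detour through $K_X=p_*F$ and the coincidence of $p$- and $q$-exceptional divisors becomes unnecessary and collapses to the paper's one-line argument: since $A$ is smooth, $\Ex(a)$ is divisorial and every $a$-exceptional prime divisor has positive discrepancy, so $K_X=a^*K_A+E\equiv E$ with $E\geq 0$ and $\Supp(E)=\Ex(a)$; the negativity lemma applied to the $a$-nef, $a$-exceptional divisor $E$ gives $E=0$, i.e.\ $\Ex(a)=\emptyset$, and $a$ is an isomorphism.
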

\begin{remark}
Note that the above result holds if we simply assume that $X$ is in Fujiki's class $\mathcal C$. Indeed, if  $X'\to X$ is a resolution of singularities such that $X'$ is K\"ahler, then $\kappa (X')=0$ and so $X'\to \Alb(X')$ is bimeromorphic, and hence so is $X\to \Alb(X')$. Note also that if $X$ is a complex manifold of maximal Albanese dimension, then $X$ is automatically in Fujiki's class $\mathcal C$. To see this, consider the Stein factorization $X'\to Y\to A$. Then $Y\to A$ is finite and so $Y$ is also K\"ahler (see \cite[Prop. 1.3.1(v) and (vi), page 24]{Var89}). Let $X'\to X$ be a resolution of singularities such that $X'\to Y$ is projective, then $X'$ is K\"ahler and so $X$ is in Fujiki's class $\mathcal C$.
\end{remark}

\begin{proof}[Proof of Theorem \ref{thm:maximal-albanese-torus}]
Since $X$ is terminal, it has rational singularities, and thus by Definition \ref{def:albanese-morphism} the Albanese morphism $a:X\to \Alb(X)$ exists. Let $\pi:\tilde{X}\to X$ be a resolution of singularities of $X$. Then $a\circ\pi:\tilde X\to \Alb(X)$ is the Albanese morphism of $\tilde X$. Moreover, since $X$ has terminal singularities, $\kappa(\tilde X)=\kappa(X)=0$. Thus replacing $X$ by $\tilde X$, we may assume that $X$ is a compact K\"ahler manifold. 
Let $d=\dim X$ and pick a general element $\Theta \in H^0(\Omega _A^d)$, where $A=\Alb(X)$. Then $0\ne a^*\Theta \in H^0(\Omega _X^d)$ and so $P_1(X)>0$.
It follows that $P_k(X)=h^0(X, \omega^k_X)>0$ for all $k>0$.
Since $\kappa (X)=0$, we have $P_1(X)=P_2(X)=1$. Thus by \cite[Theorem 19.1]{PPS17}, $X\to A$ is surjective, and hence $\dim X=\dim A=h^{1,0}(X)$. Thus by \cite[Theorem B]{PPS17}, $X$ is bimeromorphic to a complex torus and so $a:X\to A$ is (surjective and) bimeromorphic.

 Assume now that $X$ has terminal singularities and $K_X$ is nef. Let $a:X\to A$ be the Albanese morphism. By what we have seen above, this morphism is bimeromorphic.  
 
Thus $K_X\num a^*K_A+E\equiv E$, where $E\>0$ is an effective Cartier divisor such that $\Supp(E)=\Ex(a)$ (since $A$ is smooth). By the negativity lemma $E=0$, and hence $a$ is an isomorphism.
 \end{proof}~\\

 \begin{corollary}\label{cor:maximal-albanese-torus}
 Let $(X, B)$ be a compact K\"ahler klt pair. Assume that $X$ has maximal Albanese dimension and $\kappa(X, K_X+B)=0$. Then $X$ is bimeromorphic to a torus. Additionally, if $K_X+B\sim_{\mbQ} 0$, then $X$ is isomorphic to a torus.
 \end{corollary}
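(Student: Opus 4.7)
\medskip

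\noindent\textbf{Proof proposal.}
The plan is to reduce both statements to Theorem~\ref{thm:maximal-albanese-torus}. For the first assertion, I would choose a log resolution $\pi: Y \to X$ with $Y$ a compact K\"ahler manifold (possible since $X$ lies in Fujiki's class $\mcC$), and write
\[
K_Y + B_Y = \pi^*(K_X + B) + F,
\]
where $B_Y \ge 0$ is the klt boundary obtained by adding to the strict transform of $B$ the $\pi$-exceptional prime divisors of negative discrepancy (with the corresponding positive coefficients in $(0,1)$), and $F \ge 0$ is the $\pi$-exceptional divisor collecting the non-negative discrepancies. Then $\kappa(Y, K_Y + B_Y) = \kappa(X, K_X+B) = 0$, whence $\kappa(Y) \le 0$; on the other hand $\kappa(Y) \ge 0$ by Corollary~\ref{cor:maximal-albanese-kappa} applied to the smooth variety $Y$, which has maximal Albanese dimension. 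Thus $\kappa(Y) = 0$, and Theorem~\ref{thm:maximal-albanese-torus} implies that $Y$, and hence $X$, is bimeromorphic to a complex torus.

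For the additional statement, assume $K_X + B \sim_{\mbQ} 0$. By the first part and Definition~\ref{def:albanese-morphism}, the Albanese morphism $a: X \to A := \Alb(X)$ is bimeromorphic. Pushing $K_X+B \sim_{\mbQ} 0$ forward and using $K_A \sim 0$ gives $a_*B \sim_{\mbQ} 0$. Since a nonzero effective $\mbQ$-divisor on a compact complex torus cannot be $\mbQ$-linearly trivial (global sections of $\mcO_A$ are constants), we conclude $a_*B = 0$; that is, every component of $B$ is $a$-exceptional.

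Next I would pass to $g := a \circ \pi : Y \to A$, a bimeromorphic morphism of compact K\"ahler manifolds. Since $A$ is smooth, $K_Y \sim_{\mbQ} G$ for an effective $g$-exceptional divisor $G$ whose coefficient at every $g$-exceptional prime is a discrepancy over the smooth base, hence $\ge 1$. Combined with $K_Y + B_Y \sim_{\mbQ} F$ from the first part, this yields
\[
G + B_Y - F \sim_{\mbQ} 0,
\]
and every prime appearing on the left is $g$-exceptional: $F$ and the $\pi$-exceptional components of $B_Y$ are $\pi$-exceptional (hence $g$-exceptional), while the strict transforms in $B_Y$ of components of $B$ are $g$-exceptional because $a_*B = 0$. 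Applying the negativity lemma for bimeromorphic K\"ahler morphisms to both this divisor and its negative forces $G + B_Y - F = 0$ as divisors. Coefficient matching at each $g$-exceptional prime then produces a contradiction unless $B = 0$ and $X$ has no $a$-exceptional prime divisors: the strict transform of a component $B_k \subset B$ would contribute a coefficient $\ge 1 + b_k > 0$ from $G+B_Y$ that no term in $F$ (being $\pi$-exceptional and thus disjoint from such a strict transform) can absorb; the same applies to strict transforms of $a$-exceptional primes outside $B$. Therefore $B = 0$ and $a$ contracts no divisor. Since $X$ is normal (klt) and $A$ is smooth, purity of the exceptional locus of a bimeromorphic morphism to a smooth target forces $a$ to be an isomorphism, so $X \cong A$ is a torus.

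The hardest part will be the coefficient-matching step in the second half: one must carefully partition the $g$-exceptional prime divisors on $Y$ (into $\pi$-exceptional divisors, strict transforms of components of $B$, and strict transforms of $a$-exceptional primes outside $B$) and verify in each case that the identity $G + B_Y - F = 0$ cannot be satisfied with a nontrivial contribution from $B$ or from an $a$-exceptional divisor. The other inputs — existence of a K\"ahler log resolution, the bimeromorphic negativity lemma, and purity of the exceptional locus over a smooth target — are standard in the references cited by the paper.
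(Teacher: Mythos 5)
Your proof is correct, and it takes a genuinely different route from the paper's in both halves. For the first assertion the paper passes to a $\mathbb{Q}$-factorial terminalization of $(X,B)$ (invoking the MMP of \cite[Theorem 1.4]{DHP22}), deduces $\kappa(X,K_X)=0$ from Corollary \ref{cor:maximal-albanese-kappa}, and applies Theorem \ref{thm:maximal-albanese-torus} on that terminal model; you instead stay on a log resolution $Y$ and apply the same two results there, which works equally well and avoids the MMP input altogether. For the isomorphism statement the paper's computation is a one-line discrepancy comparison on the terminal model: $K_X+B=a^*K_A+E+B$ with $E\ge 0$ and $\Supp E=\Ex(a)$, so $B+E\sim_{\mathbb{Q}}0$ forces $B=E=0$. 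Your version does the same comparison on the log resolution, deriving $G+B_Y-F=0$ from the negativity lemma and reading off the contradiction at the strict transforms of components of $B$ and of $a$-exceptional divisors; the underlying mechanism is identical (discrepancies over the smooth torus $A$ are $\ge 1$), but you must additionally invoke purity of the exceptional locus over the $\mathbb{Q}$-factorial target $A$ to pass from ``$a$ contracts no divisor'' to ``$a$ is an isomorphism'' --- a fact the paper also uses implicitly when it asserts $\Supp E=\Ex(a)$. One genuine advantage of your route is that you never leave the original $X$: the paper's argument proves that the \emph{terminal model} is a torus and tacitly descends this to $X$ (which requires a further observation, e.g.\ that the fibers of the crepant terminalization are rationally chain connected while a torus contains no rational curves), whereas your argument concludes directly for $X$. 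All the auxiliary facts you rely on (Kähler log resolutions, the analytic negativity lemma, purity over a smooth target, and pushforward of $\mathbb{Q}$-linear equivalence under a bimeromorphic morphism) are standard and available in this setting, so I see no gap.
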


 \begin{proof}
 Passing to a terminalization by running an appropriate MMP over $X$ (using \cite[Theorem 1.4]{DHP22}) we may assume that $(X, B)$ has $\mbQ$-factorial terminal singularities. Now since $\kappa(X)\>0$ by Corollary \ref{cor:maximal-albanese-kappa}, $\kappa(X, K_X+B)=0$ implies that $\kappa(X, K_X)=0$. Thus by Theorem \ref{thm:maximal-albanese-torus}, $a:X\to A:=\Alb(X)$ is a surjective bimeromorphic morphism. Now assume that $K_X+B$ is nef. Then $K_X+B=a^*K_A+E+B\sim_{\mbQ} B+E$, where $E\>0$ is an effective Cartier divisor such that $\Supp (E)=\Ex(a)$, since $A$ is smooth. Thus $(B+E)\sim_{\mbQ} 0$, as $K_X+B\sim_{\mbQ} 0$, and hence $B=E=0$ (as $X$ is K\"ahler). In particular, $a:X\to A$ is an isomorphism. 
 \end{proof}~\\

 Now we are ready to prove our main theorem.\\
 
 \begin{proof}[Proof of Theorem \ref{thm:main}]
 Let $a:X\to A$ be the Albanese morphism. Since $X$ has maximal Albanese dimension, $a$ is generically finite over its image $a(X)$. By the relative Chow lemma (see \cite[Corollary 2]{Hir75} and \cite[Theorem 2.12]{DH20}) there is a log resolution $\mu :X'\to X$ of $(X, B)$ such that the Albanese morphism $a'=a\circ \mu :X'\to A$ is projective.  
     Let $K_{X'}+B'=\mu^*(K_X+B)+F$, where $ F\geq 0$ such that $\Supp (F)=\Ex(\mu)$, and $(X', B')$ has klt singularities. Note that if $(X',B')$ has a good minimal model $\psi:X '\dasharrow X^m$, then $\psi$ contracts every component of $F$ and the induced bimeromorphic map $X\dasharrow X^m$ is a good minimal model of $(X,B)$ (see \cite[Lemmas 2.5 and 2.4]{HX13} and their proofs). Thus, we may replace $(X,B)$ by $(X',B')$ and assume that $(X, B)$ is a log smooth pair and $X\to A$ is a projective morphism. 
From Corollary \ref{cor:maximal-albanese-kappa} it follows that $\kappa(X)\>0$. In particular, $\kappa(X, K_X+B)\>0$.    
 Now we split the proof into two parts. In Step 1 we deal with the $\kappa(X, K_X+B)=0$ case, and the remaining cases are dealt with in Step 2.\\
 
 \noindent {\bf Step 1.} Suppose that $\kappa (X, K_X+B)=0$. Then by Theorem \ref{thm:maximal-albanese-torus}, the Albanese morphism $a:X\to A:=\Alb(X)$ is bimeromorphic. Let $D$ be an irreducible component of the unique effective divisor $G\in | m(K_X+B)|$ for $m>0$ sufficiently divisible. We make the following claim.
 
 \begin{claim}\label{clm:a-exceptional}
$D$ is $a$-exceptional; in particular, $G$ is $a$-exceptional.
 \end{claim}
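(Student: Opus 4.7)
The plan is to argue by contradiction: assume $D$ is not $a$-exceptional. Since $a:X\to A$ is bimeromorphic between smooth varieties with $K_A=0$, the ramification formula gives $K_X\sim E$ with $E\geq 0$ an $a$-exceptional divisor satisfying $\Supp E=\Ex(a)$, so $K_X+B\sim E+B$. Choosing $m>0$ divisible enough that $mB$ is integral and $h^0(X,m(K_X+B))=1$, the unique effective member of $|m(K_X+B)|$ must equal $mE+mB$; this is $G$. Splitting $B=B_h+B_v$ into its non-$a$-exceptional and $a$-exceptional parts, the assumption on $D$ forces $B_h\neq 0$, and hence $B_h':=a_*B$ is a nonzero effective $\mathbb{Q}$-divisor on $A$. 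By Ueno's structure theorem for line bundles on a complex torus, any such divisor has positive Iitaka dimension: if $\mathcal{O}_A(B_h')$ were in $\Pic^0(A)$, then since only the trivial class in $\Pic^0(A)$ admits a global section on the compact torus, $B_h'$ would be principal and hence zero, a contradiction. Consequently $h^0(A,\mathcal{O}_A(mB_h'))\to\infty$ polynomially in $m$.

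The heart of the argument is to match this growth on $X$, contradicting $\kappa(X,K_X+B)=0$. Consider the pushforward $\mathcal{F}_m:=a_*\mathcal{O}_X(m(K_X+B))$, a rank-one torsion-free coherent sheaf on $A$ whose reflexive hull equals the line bundle $\mathcal{O}_A(mB_h')$. The goal is to identify $\mathcal{F}_m$ with this line bundle, for then $h^0(X,m(K_X+B))=h^0(A,\mathcal{O}_A(mB_h'))$ would grow in $m$. Writing $K_X+B\sim a^*B_h'+(E+B_v-F)$ where $F:=a^*a_*B_h-B_h\geq 0$ is $a$-exceptional, the projection formula gives $\mathcal{F}_m=\mathcal{O}_A(mB_h')\otimes a_*\mathcal{O}_X(m(E+B_v-F))$ provided $E+B_v-F\geq 0$; since $a_*\mathcal{O}_X(D)=\mathcal{O}_A$ for any $a$-exceptional effective divisor $D$ (both sides sit between $\mathcal{O}_A=a_*\mathcal{O}_X$ and its reflexive hull $\mathcal{O}_A$), we then obtain $\mathcal{F}_m=\mathcal{O}_A(mB_h')$. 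It therefore suffices to arrange $F\leq E+B_v$, which can be done by replacing $X$ with a sufficiently fine log resolution --- permissible because the claim is invariant under log resolutions of $(X,B)$, by the same reasoning (via \cite[Lemmas 2.4 and 2.5]{HX13}) that is used at the start of the proof of the main theorem.

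A more conceptual route uses the Fourier--Mukai machinery of Section~2: generic vanishing for klt adjoint pushforwards in the K\"ahler/complex-torus setting, obtainable from Theorem~\ref{thm:fm-isomorphism} along the lines of \cite{PPS17}, combined with the boundedness of $h^0(\mathcal{F}_m)$ in $m$, forces the Fourier--Mukai transform of $\mathcal{F}_m$ to have zero-dimensional support; by the equivalence of Remark~\ref{rmk:h-vector-bundle}, $\mathcal{F}_m$ is then a homogeneous line bundle $\alpha\in\Pic^0(A)$, hence reflexive and equal to $\mathcal{O}_A(mB_h')$, so $\mathcal{O}_A(mB_h')\in\Pic^0(A)$ has an effective representative and $B_h'=0$ as in the first paragraph. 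The main obstacle in either route is the sheaf identification step: in the direct route, verifying the coefficient inequality $F\leq E+B_v$ along every $a$-exceptional prime after the refinement (which may require further blow-ups so that $a$ becomes a log resolution of $(A,B_h')$ while keeping $(X,B)$ log smooth klt); in the conceptual route, establishing the K\"ahler/complex-torus analog of the Pareschi--Popa--Schnell generic vanishing for the adjoint sheaf $\mathcal{F}_m$.
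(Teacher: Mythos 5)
Your reduction is genuinely different from the paper's argument: the paper never identifies $G$, but instead shows $V^0(\omega_D)=\emptyset$ for each component $D$ of $G$ using the structure of cohomological support loci, generic vanishing from \cite{PPS17}, and the Fourier--Mukai equivalence of Remark \ref{rmk:h-vector-bundle}. Your identification $G=m(E+B)$ is correct, so the claim is indeed equivalent to $a_*B=0$, and Ueno's structure theorem correctly gives $\kappa(A,B_h')\geq 1$ whenever $B_h':=a_*B\neq 0$. The gap is in the step transferring this growth back to $X$: you need $F:=a^*a_*B_h-B_h\leq E+B_v$, and the proposed mechanism --- passing to a sufficiently fine log resolution --- cannot produce this inequality. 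For a fixed $a$-exceptional prime $P$ the three relevant numbers ${\rm mult}_P(F)=v_P(B_h')$, ${\rm mult}_P(E)=a(P;A,0)$ and ${\rm mult}_P(B_v)$ are invariants of the divisorial valuation $P$ (and of the crepant-pullback convention used when replacing $(X,B)$ by a log resolution), so further blow-ups do not change the inequality at $P$; they only introduce \emph{new} exceptional primes $P'$, each carrying its own constraint $v_{P'}(B_h')\leq a(P';A,0)+{\rm mult}_{P'}(B')$, which amounts to $(A,B_h')$ being klt ``with room to spare'' along $P'$ and is not implied by the klt-ness of $(X,B)$. So the injection $H^0(A,mB_h')\hookrightarrow H^0(X,m(K_X+B))$ is not established as written. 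The ``conceptual'' route has the same status: generic vanishing for $a_*\mathcal{O}_X(m(K_X+B))$ with $m\geq 2$ on a complex torus in the K\"ahler setting is precisely the kind of statement the paper is at pains to avoid needing, and you do not supply it.

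The gap is fixable within your framework, and more cheaply than by coefficient bookkeeping: since $\Supp F\subseteq \Ex(a)=\Supp E$ and only finitely many coefficients are involved, one has $tF\leq E$ for $0<t\ll 1$, hence $E+B\geq E+tB\geq tF+tB_h=a^*(tB_h')\geq 0$, and therefore $\kappa(X,K_X+B)=\kappa(X,E+B)\geq\kappa\bigl(X,a^*(tB_h')\bigr)=\kappa(A,B_h')\geq 1$, the desired contradiction with $\kappa(X,K_X+B)=0$. With this replacement your argument closes and is arguably more elementary than the paper's; but the proof as submitted does not justify its key inequality.
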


\begin{proof}
First passing to a higher model of $X$ we may assume that $D$ has SNC support. Consider the short exact sequence
 \[0\to \omega _X\to \omega _X(D)\to \omega _D\to 0.\]
 Let $V^0(\omega _D):=\{ P\in {\rm Pic}^0(A)\;|\; h^0(D, \omega _D\otimes a^* P)\ne 0\}$.
If $\dim V^0(\omega _D)> 0$, then it contains a subvariety $K+P$, where $P$ is torsion in ${\rm Pic}^0(A)$ and $K$ is a subtorus of ${\rm Pic}^0(A)$ with $\dim K>0$ (see \cite[Corollary 17.1]{PPS17}).
Since $a:X\to A$ is surjective and bimeromorphic, we have $H^i(X,a^*Q)=H^i(A,Q)=0$ for any $\OO_A\ne Q\in {\rm Pic}^0(A)$; in particular, $H^1(X, \omega_X\otimes a^*Q)=H^{n-1}(X, a^*Q^{-1})^\vee=0$, where $n=\dim X$.
Thus $H^0(X, \omega _X(D)\otimes a^*Q)\to H^0(D, \omega _D\otimes a^*Q)$ is surjective for all $\OO_A\ne Q\in {\rm Pic}^0(A)$, and so $h^0(X, \omega _X(D)\otimes a^*Q)>0$ for all $\OO_A\ne Q\in P+K$. Since $P$ is torsion, $\ell P=0$ for some $\ell>0$.
Consider the morphism
\begin{equation}\label{eqn:non-empty-system}
  |K_X+D+P+Q_1|\times \cdots \times |K_X+D+P+Q_\ell|\to |\ell(K_X+D)|,  
\end{equation}
where $Q_i\in K$ such that $\sum _{i=1}^\ell Q_i=0$.\\
Since $\dim K>0$, for $\ell\geq 2$, the $Q_1,\ldots , Q_\ell$ vary in the subvariety
$\mathcal K\subset K^{\times \ell}$ defined by the equation $\sum _{i=1}^\ell Q_i=0$. Thus $\dim \mathcal K \geq \ell\cdot (\dim K)-1\geq \ell-1\geq 1$. Therefore $\dim |\ell(K_X+D)|>0$, i.e. $ h^0(X, \ell(K_X+D))>1$. Since $D$ is contained in the support of $G$, we have $(r-\ell)G\geq \ell D$ for some $r>0$. Then
$h^0(X, rm(K_X+B))\geq h^0(X, \ell (K_X+D))>1$, which is a contradiction.
Therefore, $\dim V^0(\omega _D)\leq 0$. By \cite[Theorem A]{PPS17},  $a_*\omega _D$ is a GV sheaf so that $\bfR\hat S \mathbf\Delta _A(a_*\omega _D)=\bfR^0\hat S\mathbf\Delta _A(a_*\omega _D)$. If $\dim V^0(\omega _D)= 0$, then $\bfR^0\hat S (\mathbf\Delta _A(a_* \omega _D))$ is an Artinian sheaf of modules on $A$, and hence by Theorem \ref{thm:fm-isomorphism} and 
Remark \ref{rmk:h-vector-bundle}
\[\mathbf\Delta _A(a_*\omega _D)=(-1_A)^*\bfR S(\bfR\hat S \mathbf\Delta _A(a_*\omega _D))[g]=(-1_A)^*\bfR S(\bfR^0\hat S \mathbf\Delta _A (a_*\omega _D))[g]\] is a shift of a homogeneous vector bundle which we denote by $\mcE$ (see Remark \ref{rmk:h-vector-bundle}). But then \[a_*\omega _D=\mathbf\Delta _A(\mathbf\Delta _A(a_*\omega _D))=\mcE^\vee\] is also a homogeneous vector bundle and hence its support is either empty or entire $A$. 
The latter is clearly impossible, since ${\rm Supp}(a_*\omega _D)\ne A$, and hence $V^0(\omega _D)=\emptyset$. Thus by \cite[Proposition 13.6(b)]{PPS17}, $a_*\omega _D=0$; in particular $D$ is $a$-exceptional.
 
 \end{proof}~\\

Now by \cite[Theorem 1.4]{DHP22} and \cite[Theorem 1.1]{Fuj22} we can run the relative minimal model program over $A$ and hence may assume that $K_X+B$ is nef over $A$. From our claim above we know that $K_X+B\sim _{\mathbb Q}E\geq 0$ for some effective $a$-exceptional divisor $E\>0$. Then by the negativity lemma we have $E=0$; thus $\mcO_X(m(K_X+B))\cong\mcO_X$ for sufficiently divisible $m>0$, and hence we have a good minimal model.\\

\noindent {\bf Step 2.} Suppose now that  $\kappa (X, K_X+B)\geq 1$ and let $f:X\bir Z$ be the Iitaka fibration. Note that the ring $R(X, K_X+B):=\oplus_{m\>0}H^0(X, \mcO_X(\lrd m(K_X+B)\rrd))$ is a finitely generated $\mbC$-algebra by \cite[Theorem 1.3]{DHP22}. Define $\bar Z:=\Proj R(X, K_X+B)$. Then $Z\bir \bar Z$ is a birational map of projective varieties. Resolving the graph of $Z\bir \bar Z$ we may assume that $Z$ is a smooth projective variety and $\nu:Z\to \bar Z$ is a birational morphism. Then passing to a resolution of $X$ we may assume that $f$ is a morphism and $(X, B)$ is a log smooth pair. Write $K_F+B_F=(K_X+B)|_F$, where $F$ is a very general fiber of $f$, so that $\kappa (F, K_F+B_F)=0$. Note that $a|_F$ is also generically finite (as $F$ is a very general fiber of $f$) and thus
$F$ has maximal Albanese dimension. In particular, $(F,B_F)$ has a good minimal model by {Step 1}.
Let $\psi : F\bir F'$ be this minimal model; then $K_{F'}+B_{F'}\sim_{\mbQ} 0$. Thus by Corollary \ref{cor:maximal-albanese-torus}, $F'$ is a torus and $B_{F'}=0$; in particular, $\psi:F\to F'$ is the Albanese morphism. Thus $a|_F:F\to A$ factors through $\psi:F\to F'$; let $\alpha:F'\to A$ be the induced morphism. Let $K:=\alpha(F')$; then $K$ is a torus, and $\alpha$ is \'etale over $K$, as $F'$ and $K$ are both homogeneous varieties. Now since $A$ contains at most countably many subtori and $F$ is a very general fiber, $K$ is independent of the very general points $z\in Z$, and hence so is $F'$. Define $A':=A/K$, then $A'$ is again a torus. Since the composite morphism $X\to A'$ contracts $F$ and $\dim F=\dim K$, from the rigidity lemma (see \cite[Lemma 4.1.13]{BS95}) and dimension count it follows that there is a meromorphic map $Z\bir A'$ generically finite onto its image. Since $Z$ is smooth, we may assume that $Z\to A'$ is a morphism (see \cite[Lemma 8.1]{Kaw85}). Similarly, since $\bar Z$ has rational singularities by Proposition \ref{pro:rational-singularities}, again from \cite[Lemma 8.1]{Kaw85} it follows that $\bar Z\to A'$ is a morphism.\\

Now choose an ample $\mbQ$-divisor $\bar H$ on $\bar Z$. Let $H_X$ be the pull-back of $\bar H$ to $X$ such that $K_X+B\sim _{\mathbb Q} H_X+E$ and $|k(K_X+B)|=|k H_X|+kE$ for any sufficiently large and divisible integer $k>0$, where $E\>0$ is effective. 

Now let $\bar A :=\bar Z \times _{A'}A$. Observe that there is a unique morphism $\bar a:X\to \bar A$ determined by the universal property of fiber products. 
We claim that $E$ is exceptional over $\bar A$.  If not, then let $D$ be a component of $E$ which is not exceptional over $\bar A$. Let $h:X\to \bar Z$ be the composite morphism $X\to Z\to \bar Z$ and $W:=h(D)$.  
Choose a sufficiently divisible and large positive integer $s>0$ such that $s\bar H$ is very ample, $r(K_X+B)$ is Cartier, $rE\geq D$ and $|r(K_X+B)|=|rH_X|+rE$, where $r=(n+1)s$ and $n=\dim X$. 

\begin{equation}
\xymatrixcolsep{3pc}\xymatrixrowsep{3pc}\xymatrix{
X\ar@/_1pc/[dd]^f\ar[dr]_{\bar a}\ar@/^2pc/[rrd]^a \\
& \bar A:=\bar Z\times_{A'} A\ar[d]\ar[r] & A\ar[d]\\
Z\ar[r] & \bar Z\ar[r] & A':=A/K
}
\end{equation}

\begin{claim}\label{clm:non-vanishing}
$|K_D+(n+1)sH_D|\ne \emptyset$, where $H_D=H_X|_{D}$.
\end{claim}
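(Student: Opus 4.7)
The plan is to show $H^{0}(D,\mcO_{D}(K_{D}+(n+1)sH_{D}))>0$ by combining a Kodaira-type vanishing on $X$ with a direct construction of sections dictated by the Iitaka fibration. Throughout I write $r=(n+1)s$.

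First I would invoke adjunction to rewrite $K_{D}+rH_{D}=(K_{X}+D+rH_{X})|_{D}$ and fit this into the short exact sequence
\[
0\to \mcO_{X}(K_{X}+rH_{X})\to \mcO_{X}(K_{X}+D+rH_{X})\to \mcO_{D}(K_{D}+rH_{D})\to 0.
\]
Since $rH_{X}=h^{*}(r\bar H)$ is the pullback of a very positive divisor (being $(n+1)s$ times the very ample $\bar H$) from $\bar Z$ via the projective morphism $h:X\to\bar Z$, Koll\'ar's vanishing $R^{i}h_{*}\omega_{X}=0$ for $i>0$ combined with Serre/Kawamata-Viehweg vanishing on a resolution of $\bar Z$ (permissible because $\bar Z$ has rational singularities by Proposition~\ref{pro:rational-singularities}) yields $H^{1}(X,\mcO_{X}(K_{X}+rH_{X}))=0$. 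The restriction map $H^{0}(X,K_{X}+D+rH_{X})\to H^{0}(D,K_{D}+rH_{D})$ is therefore surjective, and it suffices to exhibit a section of $K_{X}+D+rH_{X}$ not vanishing identically on $D$.

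To produce such a section I would split into two cases according to the behaviour of $h|_{D}:D\to W:=h(D)$. In the \emph{horizontal} case, when $h|_{D}$ is generically finite, $H_{D}=(h|_{D})^{*}(\bar H|_{W})$ is big and nef on $D$; passing to a log resolution $\tau:\tilde D\to D$, the system $|K_{\tilde D}+r\tau^{*}H_{D}|$ is non-empty for $s$ sufficiently divisible and large by Kawamata-Viehweg vanishing and asymptotic Riemann-Roch, and this descends to $D$ and lifts to $X$ via the surjection above. In the \emph{vertical} case, when $h|_{D}$ is not generically finite so that $D$ is $h$-exceptional, the hypothesis that $D$ is not $\bar a$-exceptional forces $a|_{D}:D\to a(D)\subseteq A$ to be generically finite onto its image; here I would adapt the Fourier-Mukai/generic vanishing argument of Step~1 (via Theorem~\ref{thm:fm-isomorphism} and Remark~\ref{rmk:h-vector-bundle}) to the sheaf $a_{*}(\omega_{D}\otimes\mcO_{D}(rH_{D}))$ on $A$ to extract the desired section.

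The hardest step will be the vertical case, where $H_{D}$ fails to be big on $D$ and positivity must be extracted purely from the Albanese torus $A$. The choice $r=(n+1)s$ with $\dim\bar Z\le n$ is calibrated so that a general $(n+1)$-tuple of members of $|s\bar H|$ on $\bar Z$ has empty common intersection, which is exactly the input required for the generic vanishing estimates on $A$ to furnish non-trivial global sections on translates.
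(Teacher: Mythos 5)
There is a genuine gap: your case division hides the hard case, and your plan for it does not work. The paper's proof hinges on a reduction that is absent from your proposal: cut $D$ by $m=\dim W$ general members of the free linear system $|sH_D|$ (pulled back from $\bar Z$), so that after $m$ steps one reaches a general fiber $G$ of $D\to W=h(D)$; the surjectivity of the successive restriction maps $H^0(\OO_D(M))\to\cdots\to H^0(\OO_G(M|_G))$ is guaranteed by Koll\'ar-type torsion-freeness and vanishing for $h_{i,*}\omega_{D_i}\otimes\OO_{\bar Z}((n-i)s\bar H)$ \cite[Theorem 3.1]{Fuj22b} (this is also the true reason for the calibration $r=(n+1)s$: one needs $(n+1-m)s>0$ left over after $m\le n$ cuts, not the ``empty common intersection'' heuristic you give). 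On $G$ one has $M|_G\sim K_G$, and the decisive geometric point is that $G$ lands in the \emph{fiber} $\bar A_w=\bar Z\times_{A'}A|_w\cong K$, a complex torus, and $G\to\bar a(G)$ is generically finite because $D\to\bar a(D)$ is; hence $h^0(G,K_G)>0$ by Lemma~\ref{lem:effective-k-dim}. Your proposal never passes to the fibers of $D\to W$, which is where the torus structure becomes available.

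Concretely, your ``vertical'' case fails at two points. First, the assertion that ``$D$ not $\bar a$-exceptional forces $a|_D:D\to a(D)$ to be generically finite'' is unjustified and false in general: $\bar A\to A$ is only generically finite, and if $W$ lies in the locus of $\bar Z$ contracted by $\bar Z\to A'$, then $\bar a(D)$ can project to $A$ with positive-dimensional fibers even though $D\to\bar a(D)$ is generically finite. (Note that if $a|_D$ \emph{were} generically finite onto its image, no Fourier--Mukai argument would be needed at all: Lemma~\ref{lem:effective-k-dim} already gives $h^0(K_D)>0$ and one adds the effective divisor $rH_D$.) Second, even granting that premise, the generic-vanishing/Fourier--Mukai machinery of Step 1 controls the locus $V^0$ of twists by $\Pic^0(A)$ and is used there to prove a pushforward \emph{vanishes}; it does not produce a section of the specific untwisted sheaf $\omega_D(rH_D)$, i.e.\ it does not show $\OO_A\in V^0$. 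Your ``horizontal'' case (where $H_D$ is big and nef on $D$) is essentially fine, but it is also the easy case, handled by the paper's uniform argument when $\dim W=\dim D$. Finally, the opening reduction is inverted: the claim asks for a section \emph{on} $D$, and the $H^1$-vanishing/surjectivity of $H^0(\omega_X(L+D))\to H^0(\omega_D(L))$ is what the paper uses \emph{after} the claim to lift that section to $X$.
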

\begin{proof} Let $D_i=G_1\cap \ldots \cap G_i$ be the intersection of general divisors
$G_1,\ldots , G_m\in |sH_D|$, where $0\leq i\leq m:=\dim W$ and $D_0:=D$.
Let $M:=K_D+(n+1)sH_D$, then we have the short exact sequences
\[0\to \mathcal O _{D_i}(M-G_{i+1}) \to \mathcal O _{D_i}(M)\to \mathcal O _{D_{i+1}}(M)\to 0.\]
Recall that $h:X\to \bar Z$ is the given morphism; let $h_i:=h|_{D_i}$. Then
\begin{align*}
   (M-G_{i+1})|_{D_i} &\sim (K_D+nsH_D)|_{D_i}\\
                      &\sim \Bigl(K_D+\sum_{j=1}^i G_j+(n-i)sH_D\Bigr)\Big|_{D_i}\\
                      &\sim K_{D_i}+(n-i)sH_{D_i}\\
                      &\sim K_{D_i}+h_i^* (n-i)s\bar H,
\end{align*}
 where $H_{D_i}:=H_X|_{D_i}$. 
By \cite[Theorem 3.1(i)]{Fuj22b} the only associated subvarieties of 
\[R^1h_{i,*}\mcO_{D_i}(M-G_{i+1})=R^1h_{i,*} \mcO_{D_i}(K_{D_i})\otimes \mcO _{\bar Z}((n-i)s\bar H)\] are 
$W_i:=h(D_i)\subset \bar Z$, i.e. $R^1h_{i,*}\mcO_{D_i}(M-G_{i+1})$ is a torsion free sheaf on $W_i$.
 
Therefore, the induced homomorphism $h_{i,*}\mcO_{D_{i+1}}(M)\to R^1h_{i,*}\mcO_{D_i}(M-G_{i+1})$ is zero and we have the following exact sequence
\[
\xymatrixcolsep{3pc}\xymatrix{
0\ar[r] & h_{i,*}\mcO_{D_i}(M-G_{i+1})\ar[r] & h_{i,*}\mcO_{D_i}(M)\ar[r] & h_{i,*}\mcO_{D_{i+1}}(M)\ar[r] & 0.
}
\]
By \cite[Theorem 3.1(ii)]{Fuj22b} we have
\[H^1(h_{i,*}\mathcal O _{D_i}(M-G_{i+1}))=H^1(h_{i,*} \mcO_{D_i}(K_{D_i})\otimes \mcO _{\bar Z}((n-i)s\bar H))=0,\] 
and thus we have the following surjections
\begin{equation}\label{eqn:surjections}
     H^0( \OO _{D}(M))\to H^0( \OO _{D_1}(M_{D_1}))\to \cdots \to H^0( \OO _{D_m}(M_{D_m}))\to H^0(\OO _G(M|_{G})),
\end{equation}
where $G$ is a connected (and hence irreducible, as $D_m$ is smooth) component of $D_m$. Note that $G$ is a general fiber of $D\to W$, since $H_{D}$ is a pullback from $W$ and $m=\dim W$.\\

Let $w:=h(G)\in W\subset \bar Z$. Then 
$G\to \bar G:=\bar a(G)$ is generically finite (as so is $D\to \bar a (D)$ by our assumption), and 
$\bar G \to a(G)$ is an isomorphism, since $\bar A _w\to K\subset A$ is an isomorphism, as 
$\bar A _w=(A\times _{A'}\bar Z)_w=A\times _{A'}\{w\}\cong K$. In particular, $G$ has maximal Albanese dimension, and hence $h^0(G, K_G)>0$ by Lemma \ref{lem:effective-k-dim}. Now since $M|_{G}\sim K_G$, from the surjections in \eqref{eqn:surjections} it follows that  $|M|=|K_D+(n+1)sH_D|\ne \emptyset$, and hence the claim follows.

\end{proof}~\\

Now consider the short exact sequence
\[0\to \omega _X(L)\to \omega _X(L+D)\to \omega _D(L)\to 0, \] 
where $L= rH_X$.
Then by \cite[Theorem 3.1(i)]{Fuj22b}, $R^1h_* \omega _X(L)=R^1h_* \omega _X\otimes \OO _{\bar Z}(r\bar H)$ is torsion free, and hence 
$h_* \omega _X(L+D)\to h_* \omega _D(L)$ is surjective. Again by \cite[Theorem 3.1(ii)]{Fuj22b}, $H^1(h_* \omega _X(L))=H^1(h_* \omega _X\otimes \OO _{\bar Z}(r\bar H))=0$, and so $H^0(\omega _X(L+D))\to H^0(\omega _D(L))$ is surjective.
Since $|K_D+L|_D|\ne 0$ by Claim \ref{clm:non-vanishing}, $D$ is not contained in the base locus of $|K_X+L+D|$.
Let $0\leq b:={\rm mult}_D(B)<1$ and $e:={\rm mult }_D(E)>0$. Then $\sigma E+B-D\geq 0$ and ${\rm mult}_D(\sigma E+B-D)=0$ for $\sigma =\frac {1-b} e>0$. We may assume that $\sigma \leq r$ (as $r$ is sufficiently large and divisible). Adding $rE+B-D$ to a general divisor $G\in|K_X+L+D|$ we get
\[\Gamma:=rE+B-D+G\sim_{\mbQ}  (r+1)(K_X+B)\sim_\Q (r+1)(H_X+E).\] Then for any sufficiently divisible $m>0$ we have 
\[{\mult}_D(m\Gamma ) = m(r-\sigma ){\mult}_D(E)<m(r+1){\mult}_D(E),\] which is a contradiction to the fact that $|k(K_X+B)|=|kH_X|+kE$ for sufficiently divisible $k=m(r+1)>0$. Thus $D$ is exceptional over $\bar A$.\\

Let $n=\dim X$. We will run a relative $(K_{X}+B+(2n+3)s H_{X})$-MMP over $A$. Note that since $|(2n+3)sH_X|$ is a base-point free linear system on a smooth compact variety $X$, by Sard's theorem there is an effective $\mbQ$-divisor $H'\>0$ such that $(2n+3)sH_X\sim_{\mbQ} H'$ and $(X, B+H')$ has klt singularities. Thus $K_X+B+(2n+3)sH_X\sim_{\mbQ} K_X+B+H'$ and we can run a $(K_X+B+(2n+3)sH_X)$-MMP over $A$ by \cite[Theorem 1.4]{DHP22}, and obtain $X\dasharrow X'$ so that $K_{X'}+B'+(2n+3)s H_{X'}\sim_{\mbQ} ((2n+3)s+1)H_{X'}+E'$ is nef over $A$. Note that if $R$ is a $(K_{X}+B+(2n+3)s H_{X})$-negative extremal ray over $A$, then it is also $(K_{X}+B)$-negative and so it is spanned by a rational curve $C$ such that $0>(K_{X}+B)\cdot C\geq -2n$ (see \cite[Theorem 2.46]{DHP22}). But then $C$ is vertical over $\bar Z$, otherwise $(K_{X}+B+(2n+3)s H_{X})\cdot C>0$, as $H_X$ is the  pullback of an ample divisor $\bar H$ on $\bar Z$, this is a contradiction.
Thus it follows that every step of this MMP is also a step of an MMP over $\bar Z$, and hence there is an induced morphism $\mu:X'\to \bar A:=\bar Z \times _{A'}A$.
It follows that \[K_{X'}+B'\sim _\Q \mu ^* H_{\bar A}+E'\sim _{\Q, \bar A}E'\geq 0,\]
where $H_{\bar A}$ is the pullback of the ample divisor $\bar H$ by the projection $\bar A\to \bar Z$.\\
Then $E'$ is nef and exceptional over $\bar A$, and hence by the negativity lemma, $E'=0$.
But then $K_{X'}+B'\sim _\Q \mu ^* H_{\bar A}$ and since $H_{\bar A}$ is semi-ample, so is $K_{X'}+B'$.

 \end{proof}

\begin{corollary}
Let $(X,B)$ be a compact K\"ahler klt pair of maximal Albanese dimension such that $a:X\to A:=\Alb(X)$ is a projective morphism. Then  we can run a $(K_X+B)$-Minimal Model Program which ends with a good minimal model.
\end{corollary}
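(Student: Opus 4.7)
The plan is to exhibit a $(K_X+B)$-MMP terminating at a good minimal model by re-reading the construction in the proof of Theorem \ref{thm:main} as such an MMP, now carried out directly on $(X,B)$ rather than on a log resolution. Since $a: X \to A$ is projective, \cite[Theorem 1.4]{DHP22} supplies the full MMP apparatus (existence of extremal contractions, flips, and termination of an MMP with scaling) for $(X,B)$ over $A$; after passing to a small $\mbQ$-factorialization if necessary, we work in that setting. By Corollary \ref{cor:maximal-albanese-kappa} we have $\kappa(X, K_X+B) \geq 0$, and we split into the same two cases as in the proof of Theorem \ref{thm:main}.

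If $\kappa(X, K_X+B) = 0$, I would run a $(K_X+B)$-MMP over $A$ with scaling; it terminates at some $(X', B')$ with $K_{X'}+B'$ nef over $A$. Since the induced Albanese morphism $a': X' \to A$ remains bimeromorphic by Theorem \ref{thm:maximal-albanese-torus}, the bimeromorphic content of Claim \ref{clm:a-exceptional} applies to $(X', B')$ and shows that every component of an effective representative of $|m(K_{X'}+B')|$ is $a'$-exceptional. The negativity lemma then forces $K_{X'}+B' \sim_{\mbQ} 0$, producing a good minimal model.

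If $\kappa(X, K_X+B) \geq 1$, I would follow Step 2 of the proof of Theorem \ref{thm:main}: construct $\bar Z := \Proj R(X, K_X+B)$, the quotient torus $A' := A/K$, and $\bar A := \bar Z \times_{A'} A$, and pick an ample $\mbQ$-divisor $\bar H$ on $\bar Z$ whose pullback $H_X$ to $X$ is nef, together with a sufficiently divisible $s > 0$. The essential observation is that the $(K_X + B + (2n+3)sH_X)$-MMP over $A$ constructed in Step 2 of Theorem \ref{thm:main} is automatically a $(K_X+B)$-MMP: every contracted extremal ray $R$ is $H_X$-nonnegative, so
\[
(K_X+B) \cdot R \;\leq\; (K_X+B+(2n+3)sH_X) \cdot R \;<\; 0.
\]
Thus the very MMP that produces the semi-ample output in Step 2 of Theorem \ref{thm:main} is, in particular, a $(K_X+B)$-MMP terminating at a good minimal model.

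The main technical point is the reinterpretation of the perturbed MMP as a $(K_X+B)$-MMP via the elementary inequality above; all the termination and semi-ampleness statements come directly from Theorem \ref{thm:main} and \cite[Theorem 1.4]{DHP22}, so no essentially new input is required beyond the observation that $H_X$ is nef on every extremal ray contracted in the MMP.
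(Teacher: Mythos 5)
Your proposal diverges from the paper's proof, and the route you take has a real gap in the case $\kappa(X,K_X+B)\geq 1$. Step 2 of the proof of Theorem \ref{thm:main} is carried out only after replacing $(X,B)$ by a log resolution $\mu:X'\to X$: this replacement is what makes the Iitaka fibration a morphism $h:X'\to \bar Z$, so that $H_{X'}=h^*\bar H$ is an honest nef pullback and the inequality $(K_{X'}+B')\cdot R\leq (K_{X'}+B'+(2n+3)sH_{X'})\cdot R$ makes sense. Your ``essential observation'' is therefore correct on $X'$ (and the paper uses exactly this remark), but it produces a $(K_{X'}+B')$-MMP starting from the resolution, not a $(K_X+B)$-MMP starting from $X$. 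On the original $X$ the map to $\bar Z$ is only meromorphic, so $H_X$ is not available as a nef $\mbQ$-Cartier divisor, and there is no a priori reason that the induced bimeromorphic contraction $X\dasharrow X^m$ (which also has to contract the $\mu$-exceptional divisor $F$) decomposes into steps of an MMP for $(X,B)$ itself. Saying the construction is ``now carried out directly on $(X,B)$'' asserts exactly the point that needs proof. (A smaller issue of the same kind: passing to a small $\mbQ$-factorialization also moves you off of $X$, so the resulting MMP is again not literally an MMP for $(X,B)$.)

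The paper's argument is shorter and sidesteps all of this. Since $a$ is projective and generically finite onto its image, $K_X+B$ is relatively big, so by \cite[Theorem 1.4]{DHP22} and \cite[Theorem 1.8]{Fuj22} one runs the relative $(K_X+B)$-MMP over $A$ directly on $X$; each relative step is a step of the absolute $(K_X+B)$-MMP, so one may assume $K_X+B$ is nef over $A$. Then, instead of reproving Theorem \ref{thm:main}, one simply invokes it: the good minimal model $(\bar X,\bar B)$ it provides is in particular a minimal model over $A$, so $X\dasharrow \bar X$ is an isomorphism in codimension one, and the negativity lemma on a common resolution gives $p^*(K_X+B)=q^*(K_{\bar X}+\bar B)$; hence $K_X+B$ is already semi-ample. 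I would recommend replacing your case analysis by this comparison-of-minimal-models argument; your treatment of the $\kappa=0$ case is essentially workable (the $a$-exceptionality of the divisor in $|m(K_{X'}+B')|$ does descend from a resolution), but it is subsumed by the comparison argument anyway.
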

\begin{proof}
Note that since $a:X\to A$ is generically finite over image, $K_X+B$ is relatively big over $a(X)$.  Thus by \cite[Theorem 1.4]{DHP22} and \cite[Theorem 1.8]{Fuj22}, we can run a $(K_X+B)$-Minimal Model Program over $A$. Notice that each step of this MMP is also a step of the $(K_X+B)$-MMP. Therefore, we may assume that $K_X+B$ is nef over $A$ and we must check that it is indeed nef on $X$.
Let $(\bar X, \bar B)$ be a good minimal model of $(X,B)$, which exists by Theorem \ref{thm:main}. By what we have seen, $(\bar X, \bar B)$ is also a minimal model over $A$. But then 
$\phi:(X,B)\dasharrow (\bar X, \bar B)$ is an isomorphism in codimension $1$. If $p:Y\to X$ and $q:Y\to \bar X$ is a common resolution, then from the negativity lemma it follows that $p^*(K_X+B)=q^*(K_{\bar X}+\bar B)$. In particular, $p^*(K_X+B)$ is semi-ample, and hence so is $K_X+B$. Thus $(X,B)$ is a good minimal model.
\end{proof}

\bibliographystyle{hep}
\bibliography{4foldreferences}
\end{document}